\documentclass[11pt,a4paper]{amsart}
\usepackage{graphics}
\usepackage{color}
\usepackage{epic}
\usepackage[latin1]{inputenc}
\newtheorem{theo}{\bf Theorem}[section]
\newtheorem{propo}[theo]{\bf Proposition}
\newtheorem{lemma}[theo]{\bf Lemma}

\theoremstyle{remark}

\theoremstyle{definition}
\newtheorem{definition}[theo]{\bf Definition}

\begin{document}
\title {\bf Words with intervening neighbours in infinite Coxeter groups are reduced}
\author{{\sc Henrik Eriksson} and {\sc Kimmo Eriksson}}
\thispagestyle{empty}   

\begin{abstract}
Consider a graph with vertex set $S$. A word in the alphabet $S$ has
the intervening neighbours property if any two occurrences of
the same letter are separated by all its graph neighbours.
For a Coxeter graph, words represent group elements. Speyer recently proved 
that words with the intervening neighbours property are irreducible if the 
group is infinite and irreducible. We present a new and shorter proof using 
the root automaton for recognition of irreducible words.
\end{abstract}
\maketitle
\section{Words with intervening neighbours}
\noindent
Let $G$ be the Coxeter graph of a Coxeter group with generators 
$S$. Consider a word $w$ in the alphabet $S$. 
\begin{definition}
A word has the 
{\em intervening neighbours} property if any two occurrences of
the same letter are separated by all its graph neighbours.
\end{definition}
\noindent
In the example below, $s_0 s_1 s_0 s_2$ has this property, but
$s_0 s_1 s_0 s_2 s_1$ lacks it, since the two occurrences of $s_1$
are not separated by the neighbour $s_3$.
 
\begin{figure}[h]
 \begin{picture}(80,20)(-20,0)
    \put(-40,0){\circle{14}}
    \put(-44.5,-2.5){$s_0$}
    \put(-33,0){\line(1,0){26}}
    \put(0,0){\circle{14}}
    \put(-4.5,-2.5){$s_1$}
    \put(7,0){\line(1,0){26}}
    \put(40,0){\circle{14}}
    \put(35.5,-2.5){$s_3$}
    \put(5,5){\line(1,1){10}}
    \put(20,20){\circle{14}}
    \put(15.5,17){$s_2$}
    \put(35,5){\line(-1,1){10}}
  \end{picture}
\end{figure}

\noindent
David Speyer \cite{Speyer} recently proved  the following result.
\begin{theo}
For an infinite irreducible Coxeter group, all words with the
intervening neighbours property are irreducible.
\end{theo}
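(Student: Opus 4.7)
The plan is to invoke the standard Coxeter criterion for reducedness. Writing $w = s_{i_1}\cdots s_{i_k}$ and, in the geometric representation, $\beta_j := (s_{i_1}\cdots s_{i_{j-1}})(\alpha_{i_j})$, the word $w$ is reduced if and only if every $\beta_j$ is a positive root. I would argue by induction on $k$; the key observation that makes the induction run is that every contiguous substring of an IN-word is again IN, so the inductive hypothesis applies to all proper substrings.

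The inductive step considers the last letter $s := s_{i_k}$. If $s$ does not occur earlier in $w$, then $w_{k-1}$ is a reduced expression in the parabolic $W_{S\setminus\{s\}}$, and an exchange-condition argument gives $\beta_k > 0$ for free. Otherwise let $j'$ be the latest previous occurrence of $s$ and let $u = s_{i_{j'+1}}\cdots s_{i_{k-1}}$, the intervening block, which avoids $s$ and, by hypothesis, contains every graph neighbour of $s$. One has $\beta_k = w_{j'-1}\bigl(s\,u(\alpha_s)\bigr)$, and applying the inductive hypothesis to the strictly shorter substring $sus$ (provided $j'>1$) gives that $\gamma := s\,u(\alpha_s)$ is a positive root; the task becomes showing that $\gamma$ is not an inversion of the reduced prefix $w_{j'-1}$, i.e.\ that $w_{j'-1}(\gamma) > 0$.

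This last point is the main obstacle and it is precisely what the root automaton of the abstract is designed to handle: its states record, for each prefix of $w$ processed so far, a finite amount of information about which distinguished positive roots that prefix has sent negative, and the legality of appending a new letter is governed by an explicit local inequality, with $w$ being reduced exactly when the automaton never attempts an illegal transition. I would reduce the required positivity to the inequality
$$
2 \sum_{t \sim s} c_t \cos\!\bigl(\pi/m_{st}\bigr) \ge 1,
$$
where $c_t$ denotes the $\alpha_t$-coefficient of $u(\alpha_s)$, and verify it from the presence of every neighbour of $s$ in $u$ together with $2\cos(\pi/m_{st}) \ge 1$ for $m_{st}\ge 3$. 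The infinite and irreducible hypotheses enter decisively here: in a finite group the coefficients $c_t$ saturate at levels that let the sum fall short, and in a reducible diagram $s$ may have no neighbour at all, both producing IN-words that are not reduced. The boundary case $j'=1$ and $j=k$, where $sus$ equals $w$ itself, has no prefix $w_{j'-1}$ and reduces directly to the same geometric inequality.
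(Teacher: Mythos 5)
Your reduction of the whole problem to the single local inequality $2\sum_{t\sim s} c_t\cos(\pi/m_{st})\ge 1$ does not work: that inequality only controls the root $\gamma = s\,u(\alpha_s)$ itself, and says nothing about the action of the prefix $w_{j'-1}$ on $\gamma$ --- which is exactly the step you correctly identify as ``the main obstacle'' and then never actually carry out. A concrete counterexample to the claimed reduction: in the finite group $A_2$ (two generators $a,b$ joined by an edge labelled $3$) the word $abab$ has the intervening neighbours property; taking $s=b$, $j'=2$, $u=a$, one gets $c_a=1$, so your inequality $2c_a\cos(\pi/3)=1\ge 1$ is satisfied and $\gamma=ba(\alpha_b)=\alpha_a>0$, yet $w_{j'-1}(\gamma)=a(\alpha_a)=-\alpha_a<0$ and $abab$ is not reduced. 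This also shows that your account of where infiniteness enters (``the coefficients $c_t$ saturate at levels that let the sum fall short'') is not the right mechanism: in finite groups the local sum does not fall short; what fails is the positivity of $w_{j'-1}(\gamma)$, a global statement about the prefix that no inequality in the coefficients of $u(\alpha_s)$ alone can decide. (Your observation that each neighbour $t$ of $s$ occurs exactly once in $u$, hence $c_t\ge 1$ and $\gamma>0$, is correct and is essentially the paper's propagation argument for $\tilde A_n$; the induction skeleton and the parabolic case are also fine.)

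The missing global step is precisely what the paper's machinery is built to supply, and it is not cheap. The paper works with the root automaton's dichotomy between small and big roots (an irreducible word's path, once it reaches a big root, never returns to the small roots and hence never crosses to the negative side), reduces the general infinite irreducible case to the affine diagrams via two monotonicity propositions (appending a pendant vertex; increasing an edge label), checks one explicit infinite reduced intervening-neighbours word (the bicoloured word) for each affine tree and each starting vertex, and then uses the polygon-game (strong convergence) property of the roots-and-chips game, plus a chip-firing lemma reaching all edge orientations, to transfer infiniteness --- hence reducedness --- from those few words to all intervening-neighbours words. To complete your argument you would need a genuine proof that the positive root $\gamma$ is never an inversion of $w_{j'-1}$ for an intervening-neighbours word in an infinite irreducible group; as it stands, that claim is asserted, not proved, and the gesture toward the automaton does not engage with how its states would certify it.
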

\noindent
In this note, we will demonstrate how the proof of this general result 
can be reduced to checking the property for just the affine Coxeter 
groups and just a small subset of words, for which verification of the 
property is straightforward. Our tool will be the finite automaton for 
recognition of irreducible words, invented in \cite{H}. 

\section{The root automaton}
\noindent
For any group given by generators and relations, a word $w$ in the generators
is called {\em irreducible} if it is the shortest word for that group
element. In general, recognizing irreducible words is an undecidable problem.
For a Coxeter group, however, a finite recognizing automaton exists \cite{BH}.
We will here use the concrete {\em root automaton} developed by
H.~Eriksson (for details, see \cite{BB}). 

In brief, a root in a Coxeter group can be represented as a vector 
of numbers, one for each vertex of the Coxeter graph. Let $m_{xy}\ge 3$ denote
the label of the edge between two neighbouring vertices $x$ and $y$ in the Coxeter graph. 
The set of roots is generated from the unit vectors by sequences of "reflections" 
indexed by the vertices. The reflection corresponding to a vertex $x$ changes 
only the $x$-component of the vector; to obtain the new $x$-component, change 
the sign of its previous value and for each neighbour $y$ of $x$ add the 
$y$-component value weighted by $2\cos (\pi/m_{xy})\ge 1$. 

We partially order roots by componentwise $\le$. It is a fundamental
fact in Coxeter theory that the nonzero values in a root are either all positive or
else all negative, so the poset has a negative side and a positive side.

Figure \ref{automaton} illustrates the root poset for the affine
Coxeter group $\tilde A_2$, for which the Coxeter graph is a cycle with three
vertices, say $a,b,c$, and all edge labels equal to 3.

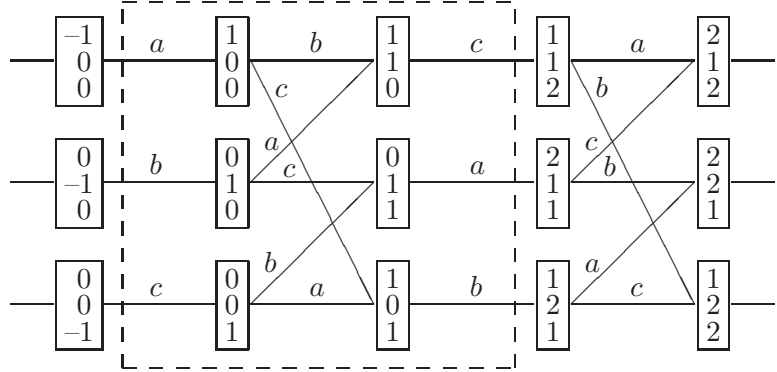
\begin{figure}\label{automaton}
\begin{picture}(300,140)(-30,-10)
\put(-17,14){\line(1,0){16}}
\put(  0,0){\fbox{\shortstack{\ 0\\ \ 0\\ --1}}}\put(18,14){\line(1,0){42}}
\put( 60,0){\fbox{\shortstack{0\\ 0\\ 1}}}\put(73,14){\line(1,0){46}}
\put(120,0){\fbox{\shortstack{1\\ 0\\ 1}}}\put(133,14){\line(1,0){46}}
\put(180,0){\fbox{\shortstack{1\\ 2\\ 1}}}\put(193,14){\line(1,0){46}}
\put(240,0){\fbox{\shortstack{1\\ 2\\ 2}}}\put(253,14){\line(1,0){16}}
\put(-17,60){\line(1,0){16}}
\put(  0,46){\fbox{\shortstack{\ 0\\ --1\\ \ 0}}}\put(18,60){\line(1,0){42}}
\put( 60,46){\fbox{\shortstack{0\\ 1\\ 0}}}\put(73,60){\line(1,0){46}}
\put(120,46){\fbox{\shortstack{0\\ 1\\ 1}}}\put(133,60){\line(1,0){46}}
\put(180,46){\fbox{\shortstack{2\\ 1\\ 1}}}\put(193,60){\line(1,0){46}}
\put(240,46){\fbox{\shortstack{2\\ 2\\ 1}}}\put(253,60){\line(1,0){16}}
\put(  0,92){\fbox{\shortstack{--1\\ \ 0\\ \ 0}}}\put(18,106){\line(1,0){42}}
\put( 60,92){\fbox{\shortstack{1\\ 0\\ 0}}}\put(73,106){\line(1,0){46}}
\put(120,92){\fbox{\shortstack{1\\ 1\\ 0}}}\put(133,106){\line(1,0){46}}
\put(180,92){\fbox{\shortstack{1\\ 1\\ 2}}}\put(193,106){\line(1,0){46}}
\put(240,92){\fbox{\shortstack{2\\ 1\\ 2}}}\put(253,106){\line(1,0){16}}
\put(-17,106){\line(1,0){16}}
 \put(73,14){\line(1,1){46}}
 \put(193,14){\line(1,1){46}}
 \put(73,60){\line(1,1){46}}
 \put(193,60){\line(1,1){46}}
 \put(73,106){\line(1,-2){46}}
 \put(193,106){\line(1,-2){46}}
\put(35,17){$c$}\put(95,17){$a$}\put(155,17){$b$}\put(215,17){$c$}
\put(35,63){$b$}\put(85,62){$c$}\put(155,63){$a$}\put(205,62.5){$b$}
\put(35,109){$a$}\put(95,109){$b$}\put(155,109){$c$}\put(215,109){$a$}
\put(78,26){$b$}\put(198,26){$a$}
\put(78,72){$a$}\put(198,72){$c$}
\put(82,92){$c$}\put(202,92){$b$}
\put(25,-10){\dashbox{5}(147,138){}}
\end{picture}
\caption{The infinite root poset of $\tilde A_2$, with the small roots indicated by the dashed box.}
\end{figure}

In order to interpret the root poset as an automaton, we let words represent
paths in the poset: The path starts at the unit root corresponding to the first 
letter of the word. For each subsequent letter, follow the the corresponding
edge in the poset (i.e., perform the corresponding reflection). 

All paths start on the positive side (because unit roots are positive). 
Paths cross over to the negative side if and only if the corresponding word may be 
shortened (reduced) by deletion of its first and the letter where the crossing occurs \cite{BB, H}. 
For example, in Figure \ref{automaton} the word $acac$ gives a path that ends by crossing
to the negative side, so we can delete the first and last letters and
obtain $ca$ as a reduced word for the same group element.
Thus, we have a deterministic automaton that detects reductions involving the first letter.

Infinite groups have an infinite number of roots, but actually the automaton only needs states 
for the finitely many {\em small roots}, 
defined as the roots that can be reached from the unit roots
without taking any step that changes a component by 2 or more. 
For example, the word $abc$ takes the automaton in Figure \ref{automaton} through three states:
$$\framebox{1 0 0}\rightarrow\framebox{1 1 0}\rightarrow\framebox{1 1 2} .$$ 
The last move increased the $c$-component by 2, so $\framebox{1 1 2}$ is not a
small root, and in fact there are just six small roots in our example. 

An automaton for recognition of irreducible words needs the small roots only, for when the current 
state has left the small roots (and reached a "big root"), it will not return to the small roots 
as long as the word is irreducible \cite{BB,H}. But such a path would have to return to
the small roots strictly before crossing over to the negative side, and then
we already know that the word allows some reduction not involving the first letter.
For example,  in deciding reducibility of a word in  $\tilde A_2$ 
starting with $abc\cdots$, we may as well delete the first letter and look at
$bc\cdots$.

\section{Intervening neighbours words in the infinite Coxeter groups }
\noindent
It is now immediate that if a word  in  $\tilde A_2$  
has the intervening neighbours property, then it is irreducible: The path taken by the
first three letters of such words always leaves the small roots, so we can simply iterate
deletion of the first letter until we are left with a two-letter word, evidently reduced.

This gives a general technique to prove irreducibility of a word: It is enough
to prove that, starting anywhere in the word, the root automaton will always reach a
big root before it reaches a negative root. For intervening neighbours words in the affine 
groups $\tilde A_n$, Speyer's theorem follows from the following simple argument.

To begin with, the 1 in the initial state will propagate to neighbour vertices, then to their
neighbours, and so on. Because of the intervening neighbours property, these 1-values will
not change before the neighbour 0-value has been raised. The last 0-value to be raised has 
two neighbour 1-values, so the automaton reaches a big root before reaching the negative side.   

\begin{propo}\label{pr:A}
For all affine groups of type  $\tilde A_n$, all words with the
intervening neighbours property are irreducible.
\end{propo}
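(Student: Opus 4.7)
The plan is to apply the criterion from Section 2: it suffices to show that, starting the automaton at any suffix of the word, the state cannot become negative before it becomes a big root. Because the intervening neighbours property passes to every suffix, it suffices to treat the full word $w=s_{i_1}s_{i_2}\cdots$ starting at the unit root $e_{i_1}$.

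In $\tilde A_n$ every edge weight equals $2\cos(\pi/3)=1$, and a direct calculation shows that the reflection $s_x$ swaps the cyclic differences $d_x:=v_x-v_{x-1}$ and $d_{x+1}:=v_{x+1}-v_x$. Hence the multiset $\{d_y:y\in V\}$ is preserved under every reflection and equals its initial value $\{+1,-1,0,\ldots,0\}$ throughout. So every state is a two-level step function on the cycle: a contiguous \emph{high arc} $A$ where $v=a+1$ and the complementary \emph{low arc} where $v=a$, for some baseline $a\in\mathbb{Z}$. The small-root regime is $a=0$; a big root has been reached iff $a\ge1$; the state is negative iff $a\le-1$. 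A case analysis shows that $a$ changes by $+1$ exactly when the applied letter is the unique low vertex, by $-1$ exactly when it is the unique high vertex, and is unchanged otherwise. In particular, $a$ can decrease only by applying the unique high vertex, which requires $|A|=1$.

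The heart of the proof will be the following monotonicity lemma: while $a=0$, the arc $A(t)$ is non-decreasing in $t$. I plan to prove this by contradiction, taking the earliest time $t$ at which $A$ shrinks. Then the applied letter $s_y$ satisfies $y\in A(t-1)$, and after a choice of side, $v_{y-1}^{(t-1)}=0$. Since $y\in A$, some $s_y$ has occurred earlier; let $s^*$ be the latest such occurrence, so $v_y\equiv 1$ throughout $(s^*,t)$. Intervening neighbours applied to $s_y$ forces $s_{y-1}$ to appear in $(s^*,t)$; let $u$ be its last appearance there, so $v_{y-1}^{(u)}=v_{y-1}^{(t-1)}=0$. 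The reflection formula at $u$ reads
\[
-v_{y-1}^{(u-1)}+v_{y-2}^{(u-1)}+v_y^{(u-1)}=0
\]
with $v_y^{(u-1)}=1$ and all values in $\{0,1\}$ (by the no-earlier-shrinkage hypothesis and $a=0$). The only solution is $v_{y-1}^{(u-1)}=1$, $v_{y-2}^{(u-1)}=0$: applying $s_{y-1}$ at $u$ therefore drops $v_{y-1}$ from $1$ to $0$, which is itself an arc-shrinkage at $u<t$, contradicting the choice of $t$.

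From the lemma the conclusion will follow quickly. Since $A(t)\supseteq\{i_1\}$ throughout the baseline-$0$ regime, the only singleton arc that can occur is $A=\{i_1\}$, i.e., the state is still $e_{i_1}$. An application of $s_{i_1}$ from this state is a re-application, so (by intervening neighbours) both $s_{i_1\pm1}$ must have appeared since position $1$; but the first appearance of either would raise its component to $1$ and enlarge the arc, contradicting $A=\{i_1\}$. Hence the unique-high vertex is never applied while $a=0$, and the baseline never drops below $0$. If the arc eventually grows to cover all but one vertex $z$, the subsequent application of $s_z$ gives $v_z=0+1+1=2$, producing a big root; otherwise the state stays in $\{0,1\}^V$ and no reduction involving the first letter is ever detected. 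The main obstacle I anticipate is the descent step in the monotonicity lemma, specifically ruling out the degenerate ``ineffective'' solution of the reflection equation at $u$; this is handled by the observation that $v_y\equiv 1$ throughout $(s^*,t)$ since no $s_y$ is applied there.
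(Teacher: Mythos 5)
Your proof is correct and takes essentially the same route as the paper's: run the root automaton on each suffix, watch the initial $1$ propagate around the cycle, use intervening neighbours to show the $1$-values cannot decay before the cycle closes up, and conclude that the last vertex to be raised receives $0+1+1=2$, a big root. The paper states this only as an informal paragraph; your cyclic-difference invariant and arc-monotonicity lemma are a rigorous filling-in of precisely the step the paper asserts without proof (``these 1-values will not change before the neighbour 0-value has been raised'').
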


A Coxeter graph defines an infinite group if and only if it has a subgraph
like one of the graphs of the affine groups, possibly with increased edge 
values (see \cite{BB}). Our next two propositions
establish that it is sufficient to prove the main theorem
for the graphs of the affine groups, depicted in Table \ref{coxfin}.

\begin{table}[hb]\centering\addtolength{\unitlength}{0.3pt}
\begin{tabular}{|lc @{\hspace{10pt}}|lc@{\hspace{10pt}}|lc@{\hspace{10pt}}|}
\hline
\raisebox{12pt}{$\tilde A_n$} &
\begin{picture}(30,25)(10,-5)
    \put(10,0){\circle{4}}     \put(25,10){\circle{4}}
    \put(20,-0.3){$\ldots$}    
    \put(40,0){\circle{4}}     \put(32,0){\line(1,0){6}} 
    \put(12,0){\line(1,0){6}}  
    \put(11.5,1){\line(3,2){12}}\put(38.5,1){\line(-3,2){12}}
\end{picture} &

\raisebox{12pt}{$\tilde B_n$} &
\begin{picture}(50,25)(0,-15)
    \put( 0,0){\circle{4}}     \put(10,0){\circle{4}}
    \put(20,-0.3){$\ldots$}    \put(4,1){$\scriptstyle 4$} 
    \put(40,0){\circle{4}}     \put(32,0){\line(1,0){6}} 
    \put( 2,0){\line(1,0){6}}  \put(12,0){\line(1,0){6}}
    \put(40,-2){\line(0,-1){6}}\put(40,-10){\circle{4}}  
    \put(50,0){\circle{4}}     \put(42,0){\line(1,0){6}} 
\end{picture} &

\raisebox{12pt}{$\tilde C_n$} &
\begin{picture}(50,25)(0,-10)
    \put( 0,0){\circle{4}}     \put(10,0){\circle{4}}
    \put(20,-0.3){$\ldots$}    \put( 4,1){$\scriptstyle 4$} 
    \put(40,0){\circle{4}}     \put(44,1){$\scriptstyle 4$} 
    \put(50,0){\circle{4}}     \put(42,0){\line(1,0){6}} 
    \put( 2,0){\line(1,0){6}}  \put(12,0){\line(1,0){6}}
    \put(32,0){\line(1,0){6}} 
\end{picture} 
\\ \hline

\raisebox{12pt}{$\tilde D_n$}  &
\begin{picture}(50,25)(0,-15)
    \put( 0,0){\circle{4}}     \put(10,0){\circle{4}}
    \put(20,-0.3){$\ldots$}      \put(40,-10){\circle{4}}
    \put(40,0){\circle{4}}     \put(10,-10){\circle{4}}
    \put(50,0){\circle{4}}     \put(42,0){\line(1,0){6}} 
    \put( 2,0){\line(1,0){6}}  \put(12,0){\line(1,0){6}}
    \put(32,0){\line(1,0){6}} 
    \put(10,-2){\line(0,-1){6}} \put(40,-2){\line(0,-1){6}} 
\end{picture} &

\raisebox{12pt}{$\tilde E_6$} &
\begin{picture}(40,25)(0,-21)
    \put( 0,0){\circle{4}}     \put(10,0){\circle{4}}
    \put(20,0){\circle{4}}     \put(30,0){\circle{4}}
    \put(40,0){\circle{4}}     \put(20,-10){\circle{4}}
    \put( 2,0){\line(1,0){6}}  \put(12,0){\line(1,0){6}}
    \put(22,0){\line(1,0){6}}  \put(32,0){\line(1,0){6}} 
    \put(20,-2){\line(0,-1){6}} 
    \put(20,-20){\circle{4}} \put(20,-12){\line(0,-1){6}} 
\end{picture} &

\raisebox{12pt}{$\tilde E_7$} &
\begin{picture}(60,25)(-10,-15)
    \put(-10,0){\circle{4}} \put(-8,0){\line(1,0){6}}
    \put( 0,0){\circle{4}}     \put(10,0){\circle{4}}
    \put(20,0){\circle{4}}     \put(30,0){\circle{4}}
    \put(40,0){\circle{4}}     \put(20,-10){\circle{4}}
    \put(50,0){\circle{4}}     \put(42,0){\line(1,0){6}} 
    \put( 2,0){\line(1,0){6}}  \put(12,0){\line(1,0){6}}
    \put(22,0){\line(1,0){6}}  \put(32,0){\line(1,0){6}} 
    \put(20,-2){\line(0,-1){6}} 
\end{picture}
\\ \hline
\raisebox{12pt}{$\tilde E_8$} &
\begin{picture}(70,25)(0,-15)
    \put( 0,0){\circle{4}}     \put(10,0){\circle{4}}
    \put(20,0){\circle{4}}     \put(30,0){\circle{4}}
    \put(40,0){\circle{4}}     \put(20,-10){\circle{4}}
    \put(50,0){\circle{4}}     \put(42,0){\line(1,0){6}} 
    \put(60,0){\circle{4}}     \put(52,0){\line(1,0){6}} 
    \put(70,0){\circle{4}}     \put(62,0){\line(1,0){6}} 
    \put( 2,0){\line(1,0){6}}  \put(12,0){\line(1,0){6}}
    \put(22,0){\line(1,0){6}}  \put(32,0){\line(1,0){6}} 
    \put(20,-2){\line(0,-1){6}} 
\end{picture} &

\raisebox{12pt}{$\tilde F_4$} &
\begin{picture}(40,25)(0,-10)
    \put( 0,0){\circle{4}}     \put(10,0){\circle{4}}
    \put(20,0){\circle{4}}     \put(30,0){\circle{4}}
    \put(40,0){\circle{4}}     \put(32,0){\line(1,0){6}}  
    \put( 2,0){\line(1,0){6}}  \put(12,0){\line(1,0){6}}
    \put(22,0){\line(1,0){6}}  \put(14,1){$\scriptstyle 4$} 
\end{picture} &

\raisebox{12pt}{$\tilde G_2$} &
\begin{picture}(20,25)(0,-10)
    \put( 0,0){\circle{4}}     \put(10,0){\circle{4}}
    \put( 2,0){\line(1,0){6}}  \put(4,1){$\scriptstyle 6$} 
    \put(20,0){\circle{4}}    \put(12,0){\line(1,0){6}} 
\end{picture}
\\ \hline 
\end{tabular}
\medskip
\caption{Coxeter graphs for all affine Coxeter groups}
\label{coxfin}
\end{table}

\begin{propo}
If a Coxeter graph $G$ has the property that all words with
intervening neighbours are irreducible, this property also holds for
the graph $G'$ obtained by extending $G$ with a vertex $s'$ and
an edge 
\begin{picture}(20,5)(-2,0)
\put(4,3){\line(1,0){12}}
\put(-2,0){s}
\put(16.5,0){$s^\prime$}
\end{picture}
.
\end{propo}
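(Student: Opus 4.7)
My plan is to split on whether the new pendant letter $s'$ occurs in the word $w'$ under consideration.

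If $s'$ does not occur, then $w'$ is a word in $S$. Since $s'$ is a $G'$-neighbour of $s$, the IN condition on $w'$ in $G'$ would force any two occurrences of $s$ to be separated by an $s'$ that is absent, so $s$ occurs in $w'$ at most once; every other letter $t\in S$ has identical neighbourhoods in $G$ and $G'$. Hence $w'$ already has the IN property in $G$ and, by hypothesis, is irreducible in $W_G$. Since $W_G$ sits as a standard parabolic subgroup of $W_{G'}$ with the inherited length function, $w'$ remains irreducible in $W_{G'}$.

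If $s'$ does occur, I would invoke the iterate-delete-first-letter technique of Section 2: it is enough to show that, from any starting position in $w'$, the $G'$-root automaton reaches a big root strictly before the negative side. Because every suffix of an IN word is again IN, I need to verify the same claim uniformly, and I would do so by mimicking the propagation argument from the paragraph preceding Proposition \ref{pr:A}. The pendant structure makes the three kinds of reflection very transparent: a reflection at $t\in S\setminus\{s\}$ acts exactly as in the $G$-automaton and ignores the $s'$-coordinate; a reflection at $s$ differs from its $G$-analogue only by the extra summand $2\cos(\pi/m_{ss'})$ times the current $s'$-coordinate; and a reflection at $s'$ alters only the $s'$-coordinate, sending it to $2\cos(\pi/m_{ss'})\cdot s - s'$. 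The IN property forces any two $s'$-reflections to be separated by an $s$-reflection, so after the first $s'$-reflection the $s'$-coordinate is at least $1$, and then the next $s$-reflection collects contributions both from the usual $G$-neighbours and from $s'$, producing an increment of $2$ or more, i.e., a big root.

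The main obstacle is making this propagation airtight in the presence of feedback between the $s$- and $s'$-coordinates: an $s'$-reflection can raise the $s'$-value up to the current $s$-value, which then feeds into the next $s$-reflection, and one must check that this feedback always pushes toward a big root rather than letting some other coordinate slip negative first. The IN requirement that two occurrences of $s'$ be separated by $s$ is exactly what makes the feedback monotone, while the hypothesis that all IN words in $G$ are irreducible is what lets us quarantine the $S$-part of the computation and treat the new pendant coordinate as a controlled perturbation.
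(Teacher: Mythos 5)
Your Case 1 (the letter $s'$ does not occur in $w'$) is correct, and is subsumed in the paper's argument without needing the parabolic-subgroup remark. The problem is Case 2. You switch to the ``reach a big root before the negative side, from every starting position'' technique, and the step that is supposed to deliver the big root is false: the first $s$-reflection after an $s'$-reflection need not change any coordinate by $2$ or more. Take $G$ to be the cycle $\tilde A_2$ on $a,b,c$, attach the pendant $s'$ to $a$ with label $3$, and consider a suffix beginning $s'\,a\,b\cdots$ (such suffixes occur in intervening neighbours words over $G'$). The root path is $e_{s'}\rightarrow e_{s'}+e_a\rightarrow e_{s'}+e_a+e_b\rightarrow\cdots$: the $a$-reflection collects $0+0+1=1$ from its three neighbours, so every early increment is $1$ and no big root appears where you claim one must. (Also, if the suffix does not begin with $s'$ and $s$ has not yet been played, the first $s'$-reflection leaves the $s'$-coordinate at $0$, not at least $1$.) You explicitly flag the $s\leftrightarrow s'$ feedback as ``the main obstacle'' without closing it, so the argument is incomplete; and note that reaching a big root is sufficient but not necessary for irreducibility, so it is the wrong invariant to force in this generality.

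The paper closes the gap by a comparison argument rather than a big-root argument. Write $w'=w_0s'w_1s'w_2s'\cdots$ with each $w_i$ over the $G$-alphabet. The $s'$-deleted word $w_0w_1w_2\cdots$ still has the intervening neighbours property over $G$ (neighbourhoods of letters other than $s$ are unchanged, and the $G$-neighbours separating two occurrences of $s$ survive the deletion), so by hypothesis it is reduced and its root path stays positive. Reinserting one $s'$ changes only the $s'$-coordinate; the next occurrence of $s$ (which the intervening neighbours property places in the following block) then picks up an extra nonnegative contribution $x$, and this perturbation propagates forward additively and nonnegatively through the remaining reflections. Hence the $G'$-path dominates the $G$-path componentwise at every step and never goes negative; applied to every suffix (suffixes of intervening neighbours words are again such words) this gives irreducibility. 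If you want to repair your Case 2, this comparison with the $s'$-deleted word is the missing ingredient that makes the feedback monotone; your own hypothesis on $G$ is used exactly there, not to control big roots.
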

\begin{proof}
Consider an intervening neighbours word $w'$ in the
extended vertex set. We may write $w'=w_0s'w_1s'w_2s'\cdots$, where the $w_i$ are words 
in the $G$-vertices. By assumption, the word $w_0w_1w_2\cdots$ is reduced, so it would
take the root automaton through a series of positive roots. Now switch to the word
$w_0s'w_1w_2\cdots$. The difference comes when we play the $s$ that necessarily is in 
$w_1$, for now the extra vertex $s'$ may make a positive contribution, say $x$. So $x$ is
added to the $s$-component and this effect propagates additively through $w_1w_2\cdots$.
Since this is an intervening neighbours word, all additives will be positive or zero.

The same argument holds for the other occurrences of $s'$ in $w'$, so the states of the
automaton will certainly stay positive. 
\end{proof}

\begin{propo}
If a Coxeter graph $G$ has the property that all words with
intervening neighbours are irreducible, this property also holds for
the graph obtained by increasing an edge label value 
\begin{picture}(23,9)(-2,0)
\put(3.5,3){\line(1,0){13}}
\put(-2,0){s}
\put(7,3.5){\tiny k}
\put(16.5,0){t}
\end{picture}
to 
\begin{picture}(23,12)(-2,0)
\put(3.5,3){\line(1,0){13}}
\put(-2,0){s}
\put(6,3.5){\tiny m}
\put(16.5,0){t}
\end{picture}
, where $k<m$. 
\end{propo}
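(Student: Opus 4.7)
The plan is to mimic the argument of the previous proposition, with the inserted vertex $s'$ replaced by the weight increment $\epsilon = 2\cos(\pi/m) - 2\cos(\pi/k) > 0$ on the edge $st$. Given an intervening neighbours word $w$, we run the root automaton in parallel in $G$ and in $G'$ on the same word and aim to show componentwise $W'^{(i)} \ge W^{(i)}$ at every step. Since $w$ is an intervening neighbours word in $G$ as well, the hypothesis gives $W^{(i)} \ge 0$, so this monotonicity will force the $G'$-path to stay on the positive side and $w$ to be reduced in $G'$.

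The two computations agree at every reflection whose formula does not involve the $st$-weight, that is, at every reflection at a vertex other than $s$ or $t$. At an $s$-reflection, the $G'$-update adds an extra contribution of $\epsilon$ times the current $t$-component beyond what the $G$-update produces; symmetrically at each $t$-reflection. Each such extra is non-negative at the instant of introduction, because the $G$-run is positive by hypothesis.

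Exactly as in the previous proof, one then tracks how these boosts propagate through subsequent reflections. The intervening neighbours property is the engine: between consecutive $s$-reflections every neighbour of $s$ (including $t$) must appear, so the boost accrued to the $s$-component is re-used in later reflections only after the intermediate $t$-reflection has itself been boosted, and vice versa. As in the previous proposition, all additives will be positive or zero, and summing over all $s/t$-reflections yields the desired componentwise domination of $W^{(i)}$ by $W'^{(i)}$.

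The main obstacle is to make this propagation argument rigorous, since the boosts actually evolve under the $G'$-reflections rather than the $G$-reflections, so one cannot directly appeal to the hypothesis applied to suffixes of $w$. A clean way to handle this is to regard the edge weight as a continuous parameter $\lambda\in[2\cos(\pi/k),\,2\cos(\pi/m)]$, view each component of $W^{(i)}$ as a polynomial in $\lambda$, and verify step-by-step that $\partial W^{(i)}/\partial\lambda\ge 0$ using the intervening neighbours property at each reflection; integrating in $\lambda$ then delivers the required monotonicity and thereby the proposition.
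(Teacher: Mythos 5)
Your overall strategy --- perturb the $st$-weight by $\epsilon=2\cos(\pi/m)-2\cos(\pi/k)>0$ and show that the perturbation only ever adds nonnegative amounts that propagate nonnegatively thanks to the intervening neighbours property --- is exactly the spirit of the paper's proof. You also correctly put your finger on the one real difficulty: if you compare the $G$-run and the $G'$-run in parallel, the extra term at an $s$-reflection is $\epsilon$ times the \emph{perturbed} run's $t$-component (not the $G$-run's), and the resulting difference propagates under the \emph{perturbed} dynamics, so the hypothesis on $G$ cannot be invoked directly. The problem is that your proposed repair does not actually escape this circle. Writing $V=\partial W/\partial\lambda$, the recursion for $V$ has source terms $W_t(\lambda)$ and $W_s(\lambda)$ at the $s$- and $t$-reflections, and each injected chunk propagates as that amount times the $\lambda$-weighted run of the corresponding suffix. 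So to conclude $V\ge 0$ at a given $\lambda$ you need to already know that the $\lambda$-runs of the word and of all its suffixes are nonnegative --- which is precisely the statement you want to integrate $V\ge 0$ to obtain for $\lambda>2\cos(\pi/k)$. Nor can this be fixed by a ``step-by-step'' check at each reflection: a single reflection replaces $W_x$ by $-W_x+\sum_y c_{xy}W_y$, so componentwise nonnegativity is not preserved by individual steps, and one is forced back to the global suffix-run argument. A boundary-of-the-good-set argument in $\lambda$ also stalls, because positivity is non-strict and a component that vanishes with vanishing derivative can still go negative to second order.

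The paper breaks the circularity by discretizing differently: it upgrades the edge label from $k$ to $m$ \emph{one use of the edge at a time}. If only the first $j$ transports along $st$ use the label $m$ and all later ones still use $k$, then the $(j+1)$-st hybrid differs from the $j$-th by a single injection $\epsilon\,W^{(j)}_t\ge 0$ (nonnegative by induction on $j$), and \emph{after} that injection the two hybrids use identical weights, namely the old label $k$ on every remaining transport. Hence the difference is a nonnegative multiple of the $G$-run of a suffix of $w$, which is positive by the hypothesis on $G$ (the suffix is again an intervening neighbours word). Induction over the uses of the edge then gives positivity of the fully relabelled run. If you replace your $\lambda$-derivative with this one-use-at-a-time interpolation, your argument closes up and coincides with the paper's.
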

\begin{proof}
Let $w$ be an intervening neighbours word, so it is reduced over $G$
and takes the automaton through positive roots. Now use the edge label
$m$ the first time a value is transported along this edge, from $s$ to
$t$, say. The result is a raise of the $t$-value and this effect propagates
additively as a positive contribution, when the rest of the word is played.
The same argument holds for all later uses of the $m$-label.
\end{proof}

\noindent
It remains for us to prove the main theorem for the graphs in Table \ref{coxfin}.
We have already covered the cyclic case in Proposition \ref{pr:A}.
All the other eight graphs are trees. 
For each of these graphs and each start vertex
$s$ in that graph, we will define the infinite intervening neighbours 
{\em bicoloured word} in the following way. Colour all neighbours of
$s$ black, colour the neighbours' neighbours white etc, so that black
and white vertices alternate. The bicoloured word starts with $s$ followed
by all blacks (in any order, they commute!), then all whites, then all
blacks etc. The intervening neighbours property is obvious.

The automaton action is so simple for the bicoloured word that the
calculations can be performed mentally. For example, in  $\tilde E_6$
with the center vertex as $s$, we start with a 1 on $s$, then we get
1 on its neighbours, then 2 on $s$, then 2 on its neighbours and
finally 4 on $s$, a raise by 2. Other start vertices also quickly
move the automaton state onto a big root, so the bicoloured word
must be reduced. We leave the trivial verifications to the
enthusiastic reader.

So for every affine graph $G$ and every starting letter $s$
we have an infinite intervening neighbours words that is reduced.
With no further reference to Coxeter theory, it is now possible to
conclude that every intervening neighbours word in $G$ is reduced.
It is all a consequence of results about games on graphs.

\section{Roots and chips -- a polygon game }
\noindent
The construction of an intervening neighbours word $w$ has an
interpretation involving edge orientations. For each edge
\begin{picture}(23,9)(-2,0)
\put(3.5,3){\line(1,0){12}}
\put(-2,0){$s$}
\put(16.5,0){$t$}
\end{picture}
the neighbours $s$ and $t$ must alternate in $w$ and we may
indicate by the arrow
\begin{picture}(23,9)(-2,0)
\put(3.5,3){\vector(1,0){12}}
\put(-2,0){$s$}
\put(16.5,0){$t$}
\end{picture}
that $s$ was used last and that it is $t$s turn to occur next.
If all edges are directed in this way, the dynamics of a growing
intervening neighbours word can be formulated as a game with two rules:
\begin{itemize}
\item Choose any {\em sink} vertex $t$ and add its letter to the word.
\item Reverse all arrows into $t$ so that $t$ becomes a {\em source}.
\end{itemize}
This version of the game occurs in \cite{HK} but if each arrow-head is
detached and pronounced a chip, we have a special case of the
{\em chip-firing game} by Bj{\"o}rner, Lov\'asz and Shor \cite{BLS}.

So now an intervening neighbours word $w$ defines a move sequence in two
different games on the same graph and we are going to merge them 
into the {\em roots and chips game}. A game position is a nonnegative 
number on each vertex and an orientation of each edge. Initially there
are zeros on all vertices and the edge orientation
\begin{picture}(23,9)(-2,0)
\put(3.5,3){\vector(1,0){12}}
\put(-2,0){$t$}
\put(16.5,0){$u$}
\end{picture}
if $u$ occurs before $t$ in $w$. If the first letter in $w$ is $s$, 
we put a 1 on the $s$-vertex and reverse all its edge directions.
The subsequent moves combine the action of the automaton and the
chip-firing. Negative roots are not allowed, so  
\begin{picture}(39,9)(-2,0)
\put(-2.2,0){0}
\put(3.5,3){\vector(1,0){11}}
\put(14.5,0){1}
\put(30.3,3){\vector(-1,0){11}}
\put(32.6,0){0}
\end{picture}
is a terminal position. The sink in the middle cannot be played as
the value would change to $-1$.

The roots and chips game is a
{\em polygon game} in the terminology of K. Eriksson \cite{K}. In this
game, the polygons are diamonds. An example in $\tilde G_2$ follows.

\hspace{27mm}
\begin{picture}(150,80)(40,-30)
    \thicklines
    \put(-5,0){\bf 0}
    \put(18,3){\vector(-1,0){16}}
    \put(10,6){\em\small 4}
    \put(20,0){\bf 1}
    \put(28,3){\vector(1,0){16}}
    \put(48,0){\bf 0}
    \thinlines
    \put(-10,-8){\line(1,0){70}}
    \put(-10,14){\line(1,0){70}}
    \put(-10,-8){\line(0,1){22}}
    \put( 60,-8){\line(0,1){22}}
    \thicklines
    \put(79,30){\bf 0}
    \put(102,33){\vector(-1,0){16}}
    \put(94,36){\em\small 4}
    \put(104,30){\bf 1}
    \put(115,33){\vector(1,0){16}}
    \put(135,30){\bf 1}
    \thinlines
    \put( 70,22){\line(1,0){74}}
    \put( 70,44){\line(1,0){74}}
    \put( 70,22){\line(0,1){22}}
    \put(144,22){\line(0,1){22}}
    \put(71,-30){$\bf\sqrt{3}$}
    \put(106,-27){\vector(-1,0){16}}
    \put(98,-24){\em\small 4}
    \put(108,-30){\bf 1}
    \put(115,-27){\vector(1,0){16}}
    \put(135,-30){\bf 0}
    \thinlines
    \put( 70,-38){\line(1,0){74}}
    \put( 70,-16){\line(1,0){74}}
    \put( 70,-38){\line(0,1){22}}
    \put(144,-38){\line(0,1){22}}
    \thicklines
    \put(37,14){\vector(2,1){33}}
    \put(49,23){c}
    \put(37,-8){\vector(2,-1){33}}
    \put(49,-22){a}
    \put(144,31){\vector(2,-1){33}}
    \put(161,23){a}
    \put(144,-25){\vector(2,1){33}}
    \put(161,-22){c}
    \thicklines
    \put(157,0){$\bf \sqrt{3}$}
    \put(190,3){\vector(-1,0){16}}
    \put(180,6){\em\small 4}
    \put(190,0){\bf 1}
    \put(196,3){\vector(1,0){16}}
    \put(216,0){\bf 1}
    \thinlines
    \put(155,-8){\line(1,0){70}}
    \put(155,14){\line(1,0){70}}
    \put(155,-8){\line(0,1){22}}
    \put(225,-8){\line(0,1){22}}
\end{picture} 

\medskip\noindent
Two neighbour vertices cannot both be sinks, so if two different 
moves are legal, they involve two non-neighbours. But then one
move does not influence neither edge directions nor root values
pertinent for the other move. Therefore the other move is still
legal and the result of making both moves is independent of the order.
And that defines a polygon game.

For polygon games, either all move sequences end in the same state after a finite
number of moves or all move sequences can be continued indefinitely.
For each of the affine groups we know 
one infinite move sequence, the bicoloured word, and we conclude that
every intervening neighbours word with the same initial state is
irreducible. The initial state includes the bicoloured edge orientation
which is only one of the $2^n$ possible edge orientations. So we
need to extend the result to all the others. Our last lemma shows
how this may be done, thus completing the proof of the main theorem.

\begin{lemma}
For each of the eight treelike affine graphs, there is an infinite 
roots and chips game that passes through all possible edge orientations. 
\end{lemma}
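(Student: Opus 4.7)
The plan is to argue, for each of the eight treelike affine graphs, that one can construct an infinite roots and chips game whose orientation trace exhausts all $2^{|E|}$ edge orientations. My starting point is the bicoloured word from a chosen root $s$, which provides an infinite legal play by the propositions proved above. The polygon-game structure additionally lets us permute non-adjacent sink plays, so within each ``blacks'' and ``whites'' layer of a cycle we have free choice of ordering.

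The main step is to argue that by combining (a) varying orderings within cycles and (b) occasional detours outside the strict bicoloured pattern whenever extra sinks become available (which happens once chip values have grown), we can realise every orientation. The structural reason is that on a tree the vertex cuts $\{C_v : v \in V\}$ span the edge-orientation space modulo the single relation $\sum_v C_v = 0$, so every orientation equals $O_0 \oplus \bigoplus_{v \in S} C_v$ for a suitable $S \subseteq V$; our task reduces to realising each such $S$ as the multiset of vertices played along some legal prefix of an infinite game.

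I would carry this out case by case. For $\tilde G_2$ and $\tilde F_4$ it is easy to enumerate orientations and exhibit a sink-play sequence that visits each; for the infinite families $\tilde B_n$, $\tilde C_n$, $\tilde D_n$ and the exceptional $\tilde E_6$, $\tilde E_7$, $\tilde E_8$ one gives a uniform recursive construction based on the tree's structure, peeling off pendant vertices and extending a game already constructed for the smaller graph. I expect the main obstacle to be combinatorial: choosing detours so that chip values remain nonnegative (keeping the plays legal) while still enumerating every orientation. This should follow from the bicoloured backbone's roughly linear growth of chip values, so after a short initial phase any bounded detour is legal, and the $2^{|E|}$ orientations can be visited in turn.
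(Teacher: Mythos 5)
Your $GF(2)$ observation --- that the vertex stars $C_v$ span the whole edge space of a tree, so every orientation can be written as $O_0\oplus\bigoplus_{v\in S}C_v$ --- is correct, but it is only a parity condition: it tells you which vertices must be fired an odd number of times, not that those firings can be scheduled so that each vertex is actually a sink at the moment it is fired. That scheduling question is the real content of the lemma, and your proposal leaves it to an unproven plan (``a uniform recursive construction,'' ``detours whenever extra sinks become available,'' case analysis over the eight graphs). The paper does not argue case by case at all: it invokes a single inductive fact about trees, proved in \cite{HK}, that in the pure sink-firing game any edge orientation of a tree can be transformed into any other, and then simply concatenates such transformations starting from the bicoloured orientation. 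The argument is uniform in the tree, so the enumeration of $\tilde B_n, \tilde C_n, \dots, \tilde G_2$ that you anticipate is unnecessary.

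The second gap is your treatment of legality with respect to the chip values. You assert that ``after a short initial phase any bounded detour is legal'' because the bicoloured backbone grows roughly linearly, but the orientation tour is not a bounded detour from the bicoloured play, and nothing in your proposal bounds the firing sequence needed to reach an arbitrary orientation relative to the values accumulated so far. The paper gets nonnegativity for free from the polygon-game dichotomy established in the preceding section: since the bicoloured word supplies one infinite play from the bicoloured initial state, \emph{every} play from that state can be continued indefinitely, so the concatenated orientation tour never terminates at a position like $0\rightarrow 1\leftarrow 0$. To repair your approach you would need (i) an actual proof of the tree-orientation reachability statement (a short induction: make a leaf a sink, fire it, recurse on the remaining tree), and (ii) an appeal to strong convergence of polygon games rather than a growth heuristic.
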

\begin{proof}
On a tree, any edge orientation can be transformed into any other edge
orientation by a chip-firing game. For the simple proof by induction,
see \cite{HK}. 

Starting with the bicoloured edge orientation, we can construct a game
that passes through all $2^n$ possible edge orientations, and we just
saw that the game will be infinite.\end{proof}

\end{document}